\documentclass[11pt,a4paper]{article}

\usepackage{epsf,epsfig,amsfonts,amsgen,amsmath,amstext,amsbsy,amsopn,amsthm,cases,listings,color
	%,lineno
}
\usepackage{ebezier,eepic}
\usepackage{color}
\usepackage{multirow}
\usepackage{epstopdf}
\usepackage{graphicx}
\usepackage{pgf,tikz}
\usepackage{mathrsfs}
\usepackage[marginal]{footmisc}
\usepackage{enumitem}
\usepackage[titletoc]{appendix}
\usepackage{booktabs}
\usepackage{url}
\usepackage{subfigure}
%\pgfplotsset{compat=1.15}
\usepackage{mathrsfs}
\usetikzlibrary{arrows}

\allowdisplaybreaks[1]

\definecolor{uuuuuu}{rgb}{0.27,0.27,0.27}
\definecolor{sqsqsq}{rgb}{0.1255,0.1255,0.1255}

\setlength{\textwidth}{150mm} \setlength{\oddsidemargin}{7mm}
\setlength{\evensidemargin}{7mm} \setlength{\topmargin}{-5mm}
\setlength{\textheight}{245mm} \topmargin -18mm

\newtheorem{definition}{Definition} %[section]
\newtheorem{theorem}[definition]{Theorem}
\newtheorem{lemma}[definition]{Lemma}
\newtheorem{proposition}[definition]{Proposition}

\newtheorem{conjecture}[definition]{Conjecture}

\newtheorem{problem}[definition]{Problem}

\def\qed{\hfill \rule{4pt}{7pt}}

%\begin{linenumbers}
\setlength{\parindent}{0pt}
\parskip=8pt

\begin{document}
	\title{\bf\Large  Coloring hypergraphs that are the union of nearly disjoint cliques}
	
	\date{\today}
	
	\author{
		Dhruv Mubayi
		\thanks{Department of Mathematics, Statistics, and Computer Science, University of Illinois, Chicago, IL, 60607 USA.
			email: mubayi@uic.edu.
			Research partially supported by NSF awards DMS-1763317, DMS-1952767, DMS-2153576, a Humboldt Research Award and a Simons Fellowship.}%NSF awards DMS-1800832
	\and Jacques Verstraete \thanks{Department of Mathematics, University of California, San Diego, CA, 92093-0112 USA.
			email: jverstraete@ucsd.edu.
			Research supported by NSF award DMS-1952786.}}
	\maketitle
	%\footnote{footnote}
	%%%%%%%%%%%%%%%%%%%%%%%%%%%%%%%%%%%%%%%%%%%%%%%%%
	\begin{abstract}
	We consider the maximum chromatic number of hypergraphs consisting of cliques that have pairwise small intersections. Designs of the appropriate parameters produce optimal constructions, but these are known to exist only when the number of cliques is exponential in the clique size. We construct near designs where the number of cliques is polynomial in the clique size, and show that they have large chromatic number. 
	
	The case when the cliques have pairwise intersections of size at most one seems particularly challenging. Here we give lower bounds by analyzing a random greedy hypergraph process. We also consider the related question of determining the maximum number of caps in a finite projective/affine plane and obtain nontrivial upper and lower bounds. 
	\end{abstract}
	%%%%%%%%%%%%%%%%%%%%%%%%%%%%%%%%%%%%%%%%%%%%%%%%%
	\section{Introduction}
For $1 \le \ell <  k \le q$, an $\ell$-$(q,k)$-system is a $k$-uniform hypergraph (henceforth $k$-graph) whose edge set is the union of cliques with $q$ vertices  that pairwise share at most $\ell$ vertices. Such hypergraphs are ubiquitous in combinatorics. Here are some examples:

\begin{itemize}
   
\item $\ell$-$(q,k)$-systems are extremal examples for many well-studied questions in extremal set theory, for example, an old open conjecture of Erd\H os~\cite{EC4} states that the maximum number of triples in an $n$-vertex triple system with no two disjoint pairs of edges with the same union is achieved by 1-$(5,2)$-systems. 
	
	\item Recently, Liu, the first author and Reiher~\cite{LMR} constructed the first family of hypergraphs that fail to have the stability property and $\ell$-$(q,k)$-systems were a crucial ingredient in constructing the extremal examples. 

\item Classical old open questions in projective geometry ask for the maximum size of caps in various  projective spaces. The triple system of collinear triples in projective space is a $1$-$(q,3)$-system by letting the $q$-sets be lines. Hence results about the independence number and chromatic number of $1$-$(q,3)$-systems have  connections to questions about large caps in projective spaces, which is a fundamental problem in finite geometry.
\end{itemize}

The chromatic number $\chi(H)$ of a hypergraph $H$ is the minimum number of colors required to color the vertex set of $H$ so that no edge of $H$ is monochromatic. A fundamental question about hypergraphs, first systematically investigated in the seminal work of Erd\H os and Lov\'asz~\cite{EL}, is to determine the maximum chromatic number of a  hypergraph with a specified number of edges. In this paper, we consider this problem for $\ell$-$(q,k)$-systems. Call a clique with $q$ vertices a $q$-clique.

\begin{definition} 
Given integers $1\le \ell < k\le q$ and $e \ge 1$, let $f_{\ell}(e,q,k)$ be the maximum chromatic number of an $\ell$-$(q,k)$-system where the number of $q$-cliques is $e$.
\end{definition}

Perhaps the most natural and interesting case is $\ell=k-1$ so in this case  we use the simpler terminology ``$(q,k$)-system'' and write $f(e,q,k)= f_{k-1}(e,q,k)$. We are interested in $f_{\ell}(e,q,k)$ when $k$ is fixed and both $q$ and $e$ are large. Special cases of this function have been extensively studied in the past. For example, the celebrated Erd\H os-Faber-Lov\'asz conjecture~\cite{EFL, K, KKKMO}  for graphs, which states that the maximum chromatic number of a collection of $q$ almost disjoint $q$-cliques is $q$, is the statement $f(q,q,2)=q$. Another example when $k>2$ is fixed,  is $f_{\ell}(e,k,k)$, the  largest possible chromatic number of partial designs, a classical question first studied by Erd\H os and Lov\'asz~\cite{EL}, and subsequently by Ajtai et.al.~\cite{AKPSS} whose results were sharpened by R\" odl and \v{S}i\v{n}ajov\'a~\cite{RS} and many others.

It is well-known that for $k$ fixed, every $k$-graph with $m$ edges has chromatic number  $O(m^{1/k})$. 
Indeed, color the vertices randomly and independently with $O(m^{1/k})$ colors. The expected number of monochromatic edges is $O(m^{1-(k-1)/k}) = O(m^{1/k})$. Now assign new colors one by one to some vertex inside each monochromatic edge to get a proper coloring. Altogether we used at most $O(m^{1/k})$ colors and there are no monochromatic edges. 

 An $(e,q,k)$-system has exactly $e{q \choose k}$ edges, so for fixed $k$, the argument above yields
$$f(e,q, k)  = O(e^{1/k} q).$$
If there is an $n$-vertex $q$-graph $H$ such that every $k$-set of vertices lies in exactly one edge, then the chromatic number of the $(q,k)$-system comprising the $q$-cliques of $H$ is exactly $n/(k-1)$. The number of $q$-sets in $H$ is $e={n \choose k}/{q \choose k} = (n)_k/(q)_k$ and hence the chromatic number of this $(q,k)$-system is of order $e^{1/k}q$. This shows that $f(e,q,k) = \Theta(e^{1/k}q)$ for $e=(n)_k/(q)_k$. However, all known constructions of these designs require the number of vertices, $n$, and hence also $e$, to be exponential in $q$.

Our first result below is a similar lower bound (construction) for $f(e,q,k$) when $e$ is polynomial in $q$. The construction combines algebraic and probabilistic ideas, by taking a random restriction of $(q,2)$ systems obtained from an affine plane -- see also~\cite{AMMV}.

\begin{theorem} \label{thm:main}
	Fix $k \ge 2$. Suppose that $q>k$ is sufficiently large and $2^q > e >(50q)^k$. Then there exists an $(e,q,k)$-system with  chromatic number at least $\Omega(e^{1/k} q)$. Consequently, for this choice of parameters, $f(e,q,k) = \Theta(e^{1/k} q)$.
\end{theorem}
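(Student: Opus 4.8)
The plan is to treat the matching upper bound $f(e,q,k)=O(e^{1/k}q)$ as already established (it is the random-coloring argument recorded above), so that everything reduces to constructing a single $(e,q,k)$-system $H$ with $\chi(H)=\Omega(e^{1/k}q)$. I would bound the chromatic number from below through the independence number, $\chi(H)\ge |V(H)|/\alpha(H)$, and aim for a system on $n=\Theta(e^{1/k}q)$ vertices whose independence number is bounded by a constant depending only on $k$. This is precisely the design-type regime: for a Steiner system every $k$-set is covered and $\alpha=k-1$, giving exactly this bound, and the whole point is to reproduce $\alpha=O_k(1)$ with only $\approx e$ cliques rather than the exponentially many forced by the known design constructions.

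The algebraic heart of the construction generalizes the affine plane. Fix a prime power $Q$ and work in $\mathbb{F}_Q^2$; as cliques take the graphs $\{(x,f(x)):x\in\mathbb{F}_Q\}$ of all polynomials $f$ of degree at most $k-1$, together with the $Q$ vertical lines $\{a\}\times\mathbb{F}_Q$. Two graphs of distinct polynomials of degree $\le k-1$ meet in at most $k-1$ points, and a graph meets a vertical line once, so this is a $(Q,k)$-system; for $k=2$ it is precisely the affine plane of order $Q$. Its independence number is at most $(k-1)^2$: an independent set meets each vertical line in at most $k-1$ points (else those points form a $k$-subset of that line-clique), and it meets at most $k-1$ distinct columns (else $k$ points in distinct columns lie on the unique interpolating polynomial of degree $\le k-1$, hence on a common clique). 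This is the higher-uniformity analogue of the statement that the affine plane is a complete graph.

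I would then apply a random restriction to descend from clique size $Q$ to clique size $q$. Choosing $Q\approx e^{1/k}$ (a prime power in the admissible range, available by prime-gap estimates) and keeping each of the $Q^2$ points independently with probability $p=q/Q$, every clique has expected size $pQ=q$, and by concentration almost all of the $\approx Q^k$ cliques have size close to $q$; the surviving vertex set has size $\approx pQ^2=qQ=\Theta(e^{1/k}q)$. Crucially, passing to an induced subsystem cannot increase the independence number, so the restricted system still satisfies $\alpha\le (k-1)^2$. After a cleanup step that trims each retained clique to exactly $q$ vertices (intersections only shrink, so the $(q,k)$-condition is preserved), I obtain a genuine $(q,k)$-system with $n=\Theta(e^{1/k}q)$, roughly $e$ cliques, and $\alpha=O_k(1)$, whence $\chi\ge n/\alpha=\Omega(e^{1/k}q)$. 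Monotonicity of $f$ in $e$ lets me hit the sparse values $Q^k$ and deduce the bound for all $e$ in the stated range, and combining with the given upper bound yields $f(e,q,k)=\Theta(e^{1/k}q)$.

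The main obstacle is the cleanup: trimming cliques to size exactly $q$ deletes edges and could in principle destroy the covering that forces $\alpha=O_k(1)$, and a naively fully random choice of $q$-subsets would only produce a constant-density hypergraph, whose independence number is $\Theta((\log n)^{1/(k-1)})$ --- exactly the logarithmic loss one must avoid in order to match the upper bound. The algebraic redundancy (many interpolating polynomials through any few points, together with the explicit column cliques) is what I expect to push the covering through after trimming, and making this robust is the crux of the argument. The constraint $e<2^q$ should enter precisely here, since union bounds over the $\approx e=Q^k$ cliques require the concentration failure probabilities $e^{-\Omega(q)}$ to beat $Q^k$, i.e. $k\log Q=O(q)$; and $e>(50q)^k$ guarantees $p=q/Q<1/50$, so that there is genuine room to subsample and the clique sizes concentrate around $q$.
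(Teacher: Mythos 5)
You have the right construction --- the paper uses exactly the same algebraic object (graphs of polynomials of degree less than $k$ over $\mathbb{F}_Q$ together with the vertical columns, with $\alpha \le (k-1)^2+1$ via Vandermonde interpolation and pigeonhole) and the same random-restriction idea. But the step you yourself flag as ``the crux'' is a genuine gap, and your proposed resolution would not work. With sampling probability $p=q/Q$ the clique sizes concentrate \emph{around} $q$, so a constant fraction of the $\approx Q^k$ cliques exceed $q$ and must be trimmed. The hope that ``algebraic redundancy (many interpolating polynomials through any few points)'' rescues the covering fails at exactly the place it is needed: for $k$ surviving points in $k$ distinct columns there is a \emph{unique} polynomial of degree less than $k$ through them, hence a unique clique certifying that they are not independent. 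If trimming that one clique discards even one of those $k$ points, no other clique covers them, and the bound $\alpha=O_k(1)$ is lost. (Cliques that come up short of $q$ vertices pose the separate problem that they cannot be ``trimmed to exactly $q$'' at all.)

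The paper's fix is simple and goes in the opposite direction: sample at rate $p=q/(10Q)$ so that the expected clique size is $q/10$, and use Chernoff plus a union bound over the at most $e<2^q$ cliques (this is precisely where $e<2^q$ enters, as you guessed) to get a restriction $W$ in which \emph{every} clique has at most $q$ surviving vertices while $|W|=\Omega(qQ)$. Then, instead of trimming, each clique $f\cap W$ is \emph{enlarged} with $q-|f\cap W|$ brand-new vertices, pairwise disjoint across cliques. Padding with fresh vertices preserves the pairwise-intersection condition and deletes no edges induced on $W$, so the subhypergraph induced on $W$ still has independence number at most $(k-1)^2+1$ and $\chi \ge |W|/k^2 = \Omega(qQ)=\Omega(e^{1/k}q)$. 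So your argument is salvageable, but only by replacing the trim-down cleanup with an undershoot-and-pad cleanup; as written, the proof is incomplete.
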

The case $k=2$ deserves further mention.
Hindman~\cite{H} was the first to observe that the determination of $f(q,q,2)$ is equivalent to determining the edge chromatic number of the dual hypergraph, and using this formulation Chang and Lawler~\cite{CL} gave the following nontrivial upper bound. 

\begin{proposition}{\rm (\cite{CL})} \label{prop} 
Suppose that $2<q<e<q^2$. Then $e/4<f(e,q,2) < 3e/2$. 
\end{proposition}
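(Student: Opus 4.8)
The plan is to pass to the dual and reduce the problem to edge coloring, following Hindman's observation. Given an $(e,q,2)$-system $H$ with cliques $C_1,\dots,C_e$, I would form the dual hypergraph $D$ whose vertices are the cliques and whose edges are the points of $H$ lying in at least two cliques: a point in cliques $C_{i_1},\dots,C_{i_t}$ becomes the edge $\{i_1,\dots,i_t\}$. Because two cliques meet in at most one point, $D$ is linear (two edges meet in at most one vertex) and has maximum degree at most $q$. A proper coloring of $H$ is exactly a coloring of its points in which the points of each clique receive distinct colors; the private points (in a single clique) can always be fitted once at least $q$ colors are available, so the only genuine constraint is that the shared points of each clique be distinctly colored, which is precisely a proper edge coloring of $D$. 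This gives $\chi(H)=\max(q,\chi'(D))$, and conversely every linear hypergraph on $e$ vertices with maximum degree at most $q$ is realizable as such a $D$. Hence $f(e,q,2)=\max\{q,\max_D\chi'(D)\}$, and since $q<e$ the two stated bounds will follow from bounding $\chi'(D)$.

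For the upper bound I would edge color $D$ greedily, processing its edges in order of decreasing size. When an edge $E$ of size $s$ is reached, each vertex $v\in E$ lies in at most $(e-1)/(s-1)$ edges of size at least $s$ (the petals of the edges through $v$ are pairwise disjoint by linearity), so the number of previously colored edges meeting $E$ is at most $s(e-1)/(s-1)$. For $s\ge 3$ this is at most $\tfrac32(e-1)<\tfrac32 e$, so all edges of size at least three cost fewer than $\tfrac32 e$ colors. The size-two edges are the crux: here the count degrades to $2(e-1)$. When $e\ge \tfrac43 q$ this is harmless, since a size-two edge $\{u,v\}$ forbids at most $\deg(u)+\deg(v)-2\le 2(q-1)<\tfrac32 e$ colors; the remaining narrow band $q<e<\tfrac43 q$ is an approximate instance of Erd\H{o}s--Faber--Lov\'asz, and to keep the total below $\tfrac32 e$ I would invoke the finer Chang--Lawler charging argument, coloring the graph of size-two edges by a Vizing/Shannon-type scheme coordinated with the colors already spent on the larger edges. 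I expect this size-two analysis in the EFL regime to be the main obstacle, as it is exactly where the factor $\tfrac32$ (rather than $2$) is hard-won.

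For the lower bound it suffices to exhibit one $(e,q,2)$-system of chromatic number exceeding $e/4$. I would take a projective plane of prime order $p$, where $p$ is chosen by standard estimates on gaps between primes so that $p^2+p+1\le e$ while $p^2+p+1>e/4$; this is possible because $e<q^2$ forces $p\le q-1$. Its lines form a linear hypergraph $D_0$ on $p^2+p+1\le e$ vertices in which any two edges intersect, so all edges need distinct colors and $\chi'(D_0)=p^2+p+1$. Each line has $p+1\le q$ points, so $\Delta(D_0)\le q$; padding $D_0$ with isolated vertices to reach exactly $e$ vertices and realizing it as an $(e,q,2)$-system $H$ via the reduction above yields $\chi(H)\ge\chi'(D_0)=p^2+p+1>e/4$. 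Combining the two bounds gives $e/4<f(e,q,2)<3e/2$.
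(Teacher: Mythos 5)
Your lower bound is correct and is essentially the paper's: both arguments take a plane of prime order $p\approx\sqrt{e}$ (you use the projective plane, the paper the affine plane, padded to $q$-cliques), observe that all $\Theta(e)$ points, respectively lines, pairwise conflict, and conclude $\chi>e/4$; the prime is supplied by Bertrand's postulate and $e<q^2$ guarantees $p+1\le q$. Your upper bound is also the same method as the paper's, just phrased in the dual: your greedy edge coloring of $D$ in decreasing order of edge size is precisely the Chang--Lawler greedy vertex coloring of $H$ in decreasing order of the number of cliques through a vertex, with your bound $s(e-1)/(s-1)$ corresponding to the paper's $k(e-k)/(k-1)$.

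The genuine gap is exactly where you flag it: the dual edges of size two (vertices of $H$ lying in exactly two cliques). Your counting gives fewer than $\tfrac32 e$ forbidden colors only for $s\ge 3$; for $s=2$ you get $2(e-1)$ from the disjoint-petals count and $2q-2$ from the degree count, and neither is below $\tfrac32 e$ when $q<e<\tfrac43(q-1)$. Saying you would ``invoke the finer Chang--Lawler charging argument'' is a citation, not a proof, so as written you have established only $f(e,q,2)<2e$, not the claimed $3e/2$. For what it is worth, this is precisely the point the paper also does not prove: its displayed argument yields $2e$ uniformly over all $k\ge 2$, the constant $3/2$ in the proposition is attributed to \cite{CL}, and the authors explicitly remark that improving $2e$ to about $(3/2)e$ requires ``a more careful analysis in the case $k=2$'' which they omit. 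So your proposal matches the paper's level of completeness, but if the target is the stated bound $3e/2$, the size-two case in the band $q<e<\tfrac43 q$ must actually be carried out (e.g., by the global charging over colors in \cite{CL}), and a purely local count of forbidden colors at a single size-two edge will not suffice there.
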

The lower bound in the proposition is proved as follows: let $p$ be a prime such that $e^{1/2}/2<p \le e^{1/2}$ and let $A(2,p)$ be the affine plane of order $p$. Form $H$ by enlarging each line of $A(2,p)$ by adding $q-p\ge q-e^{1/2} \ge 0$ new vertices such that distinct lines have disjoint enlargements. The resulting $(q,2)$-system $H$ has $p^2\le e$ $q$-cliques, and we may add disjoint $q$-sets arbitrarily so that we have exactly $e$ cliques. In a proper coloring, every two vertices in $A(2,p)$ must receive distinct colors, and hence $\chi(H) \ge p^2>e/4$. 

If we use the prime number theorem, then as  $q\rightarrow \infty$, this construction  yields $f(e,q,2)\ge (1+o(1))e$. Kahn~\cite{K} proved an upper bound for $f(e,q,2)$ that is asymptotically optimal in the range $e< q^2$ so  as $q \rightarrow \infty$, 
$$f(e,q,2) = (1+(1))e \qquad \hbox{ for } \qquad q \le e \le q^2.$$
There is a  further improvement $f(e,q,2) \le e$ for $e$ sufficiently large~\cite{KKKMO}.

The next case $k=3$ seems wide open in the case that $e$ is small. For $e>q^3$, we have $f(e,q,3)= \Theta(e^{1/3}q)$ by Theorem~\ref{thm:main}. For $q^{3/2}<e<q^3$, we have the lower bound $\Omega(e^{2/3})$ by taking the circles in an inversive plane of order $(1+o(1))e^{1/3}$ and adding disjoint sets of new points to the circles to create $q$-sets. Any three points in the inversive plane lie in a circle and hence an edge of our hypergraph, so the independence number is at most two, and the chromatic number is at least half the number of vertices in the inversive plane which is $\Theta(e^{2/3})$. Apart from this we have no nontrivial lower or upper bounds in the range $q<e<q^3$.

\begin{problem}
Determine the order of magnitude of $f(e,q,3)$ in the range $q<e<q^3$.
\end{problem}

Another  interesting case  is $f_1(e,q,k)$ when $k>2$ (the $q$-sets form a  linear $q$-graph). Here we prove the following theorem which gives bounds that get closer  as $k$ increases. The lower bound is obtained via a random greedy algorithm. It is one of the few instances where a constrained random $q$-graph process on $n$ vertices is analyzed where $q$ is polynomial in $n$.
\begin{theorem} \label{thm:main2}
	Fix $k \ge 3$. There exist a constant $C$  such that 
 $$e^{\frac{1}{2k-2}}q^{1-\frac{2}{k-1}+o(1)}  <
 f_1(e,q,k) < C\, e^{\frac{1}{2k-2}} q$$
 where the lower bound holds for $e>q^{2k+2}$.
\end{theorem}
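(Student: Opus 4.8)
Throughout write $n$ for the number of vertices, and recall the hypergraph bound $\chi(H)\ge n/\alpha(H)$; both inequalities in the theorem are ultimately statements about how small an independent set a $1$-$(q,k)$-system can be forced to have, where an independent set is a vertex set meeting every $q$-clique in at most $k-1$ points.

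\textbf{Upper bound.} The one structural fact I need is a pair count: since two $q$-cliques meet in at most one vertex, no pair of vertices lies in two cliques, so the cliques cover $e\binom q2$ distinct pairs and $e\binom q2\le\binom n2$, forcing $n\ge c\,q\sqrt e$. Hence the average clique-degree of a vertex is $eq/n=O(\sqrt e)$. I would then colour probabilistically: set aside the $O(eq/\Delta)$ vertices whose degree exceeds a threshold $\Delta\asymp\sqrt e$ and colour the pairwise near-disjoint clique-stars through them directly, and on the bounded-degree remainder take a random colouring with $t\asymp e^{1/(2k-2)}q$ colours, repairing the rare cliques containing $k$ equal-coloured vertices by a Local-Lemma or semi-random recolouring that reuses the palette. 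Balancing the per-clique failure probability $\binom qk t^{1-k}$ against the local dependency is exactly what turns the degree scale $\sqrt e$ into the exponent $\tfrac1{2k-2}$ on $e$; holding the $q$-exponent down to $1$, rather than the larger power a crude Local-Lemma bound gives, is the delicate point and is the reason for the degree decomposition.

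\textbf{Lower bound.} For the construction I would run a random greedy process. Fix $n=q^{1+o(1)}\sqrt e$ and build a linear $q$-graph on $[n]$ by repeatedly choosing a uniformly random $q$-set among those meeting every previously chosen clique in at most one vertex, stopping after $e$ cliques; the hypothesis $e>q^{2k+2}$ makes $n>q^{k+2}$, so $q$ is only a small polynomial in $n$ and the pair budget $\binom n2$ comfortably exceeds the $e\binom q2$ pairs to be covered. To bound $\alpha$ I would use the first moment. For a fixed $S$ with $|S|=s$, a uniformly chosen admissible $q$-set meets $S$ in at least $k$ points with probability of order $\binom sk(q/n)^k$; if the process is quasirandom, the expected number of placed cliques that ``kill'' $S$ is about $e\binom sk(q/n)^k$, and taking $s$ just past the value where this overtakes the entropy term $s\log(n/s)$ drives the expected number of independent $s$-sets below $1$. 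This yields $\alpha(H)\le e^{\frac{k-2}{2(k-1)}}q^{\frac2{k-1}+o(1)}$, and $\chi(H)\ge n/\alpha(H)\ge e^{\frac1{2k-2}}q^{1-\frac2{k-1}+o(1)}$ as claimed. The surplus factor $q^{2/(k-1)}$ over the upper bound is precisely the slack the greedy process is forced to leave, which is why the two bounds approach each other as $k\to\infty$.

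\textbf{Main obstacle.} The crux is justifying the first-moment heuristic for the actual process when $q$ is polynomial in $n$, a regime outside the usual nibble/greedy calculus that is tuned for bounded uniformity. Two things must hold with quantitative control: that the process survives to place all $e$ cliques -- equivalently that the count of still-admissible $q$-sets stays positive, which I would track by the differential equation method monitoring the fraction of blocked pairs -- and that the placed cliques are spread uniformly enough that the estimate $e\binom sk(q/n)^k$ is valid simultaneously for \emph{every} one of the $\binom ns$ candidate sets $S$. Both demand martingale concentration (Freedman- or Azuma-type) whose error terms carry powers of $q$, and forcing those errors below the first-moment threshold across the full union bound is where essentially all the work lives; it is also the source of the $q^{o(1)}$ corrections and of the $q^{2/(k-1)}$ gap between the two bounds.
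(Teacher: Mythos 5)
Your overall architecture matches the paper's in both directions (degree decomposition plus Lov\'asz Local Lemma for the upper bound; the random greedy linear $(q,k)$-process plus a union bound over $t$-sets for the lower bound), but in each direction the step you defer as ``delicate'' is precisely the step where the real argument lives, and your proposed route for it either is incomplete or would not work as stated.

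For the upper bound, after setting aside the set $B$ of $O(e^{1/2}q)$ vertices of $k$-graph degree exceeding $d=e^{1/2}q^{k-1}$, you propose to ``colour the pairwise near-disjoint clique-stars through them directly.'' This cannot be done with only $O(e^{1/(2k-2)}q)$ colours by any direct assignment, since $|B|$ can be as large as $e^{1/2}q \gg e^{1/(2k-2)}q$. The missing idea is that the \emph{induced} subhypergraph $H[B]$ again has maximum degree $O(d)$: for $v\in B$ the $q$-cliques $A_1,\dots,A_p$ through $v$ satisfy $A_i\cap A_j=\{v\}$ (this is where $\ell=1$ enters), so $\sum_i(|A_i\cap B|-1)\le |B|$, and convexity gives $\sum_i\binom{|A_i\cap B|-1}{k-1}\le \frac{|B|}{q-1}\binom{q-1}{k-1}=O(e^{1/2}q^{k-1})$. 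One then runs the Local Lemma on $H[B]$ with a \emph{fresh} palette (not a reused one, so that edges meeting both $A$ and $B$ are automatically safe). Without this degree bound on $H[B]$ your colour count does not close.

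For the lower bound, your plan is a first-moment/quasirandomness argument: estimate the expected number of placed cliques meeting a fixed $s$-set in $\ge k$ points and push it past the entropy term, with the process's regularity certified by the differential equation method and martingale concentration. Two problems. First, your own heuristic, computed with $n\asymp q\sqrt e$, gives the threshold $s\asymp e^{(k-2)/(2k-2)}(\log n)^{1/(k-1)}$ with \emph{no} power of $q$, i.e.\ it predicts $\chi\ge e^{1/(2k-2)}q^{1-o(1)}$, strictly stronger than the theorem; the $q^{2/(k-1)}$ loss you then assert is not produced by your calculation, which signals that the first moment of ``killing cliques'' is not the quantity that controls independence here. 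Second, establishing quasirandomness of the constrained process when $q$ is a polynomial in $n$ is exactly the open difficulty; the paper's proof deliberately avoids it. Instead it argues deterministically that if $I$ is independent then every $q$-set meeting $I$ in $\ge k$ points must be \emph{blocked} (intersected in $\ge 2$ points) by some chosen edge, so by counting blocked sets at least $a_j/k=m/(kb_j)$ chosen edges must lie in some class $W_j$ ($j\le k-1$); it then bounds the probability of accumulating that many $W_j$-edges by a telescoping conditional-probability product that yields $\exp(-ma_j/(6kw_j))$, using only the one-sided fact that $m_i\ge m/2$ before the critical time. The constraints $n\gg tq^2$ and $t^{k-1}\gg n^{k-2}\log n/q^{k-4}$ emerging from the blocking counts $b_0,\dots,b_{k-1}$ are the genuine source of the $q^{-2/(k-1)}$ loss. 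As written, your proposal does not contain a workable substitute for this blocking argument.
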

We believe that the upper bound above is sharp in order of magnitude for $e$  sufficiently large.

\begin{conjecture}
Fix $k \ge 3$. There exists $C=C_k>0$ such that if $e>q^C$, then
$$f_1(e,q,k) = \Theta(e^{\frac{1}{2k-2}}q) \qquad (q \to \infty).$$
\end{conjecture}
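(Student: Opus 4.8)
Since the upper bound $f_1(e,q,k) < C\,e^{1/(2k-2)} q$ is exactly the bound already proved in Theorem~\ref{thm:main2}, only the matching lower bound remains, and the whole problem is to recover the factor $q^{2/(k-1)}$ lost by the construction behind that theorem. The plan is to produce, for each $e > q^{C}$, a linear $(q,k)$-system witnessing $\chi = \Omega(e^{1/(2k-2)} q)$ through the bound $\chi \ge n/\alpha$, where $n$ is the number of vertices and $\alpha$ the independence number, i.e.\ the largest vertex set meeting every $q$-clique in at most $k-1$ points. Working at a near-perfect packing with $n = \Theta(q\sqrt e)$, the target $\chi = \Theta(e^{1/(2k-2)} q)$ is equivalent to $\alpha = O(e^{(k-2)/(2k-2)})$, since $n/e^{(k-2)/(2k-2)} = \Theta(q\,e^{1/(2k-2)})$; thus everything reduces to bounding the independence number of a suitable (pseudo)random partial Steiner system, improving the existing bound $\alpha = O(e^{(k-2)/(2k-2)} q^{2/(k-1)})$ by the factor $q^{2/(k-1)}$.

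First I would fix the host: a quasirandom linear $(q,k)$-system with $e$ cliques on $n = \Theta(q\sqrt e)$ vertices, obtained from the random greedy / nibble process but analyzed so as to be uniform down to small scales --- for every $j \le k$ the number of cliques meeting a fixed $j$-set in all $j$ of its points is close to its expectation, and no small vertex set is met in $k$ or more points by abnormally many cliques. The regime $e > q^{C}$ with $C$ large should be the enabling feature: error probabilities in the process are governed by negative powers of $e$, so the system can be pushed much nearer the packing bound and to perfect uniformity than in the range $e > q^{2k+2}$ of Theorem~\ref{thm:main2}. This is presumably why one expects the sharp bound only once $e$ is a large power of $q$.

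The crux is the upper bound on $\alpha$. A first-moment estimate over the random host gives that the expected number of independent $s$-sets is roughly $\binom{n}{s}\exp\!\big(-c\,e\binom{q}{k}(s/n)^{k}\big)$; since $e\binom{q}{k}(s/n)^{k} = \Theta(q^{k-2}s^{k}/n^{k-2})$, demanding this exponent to beat the entropy $\ln\binom{n}{s} = \Theta(s\log(n/s))$ yields $s = \Theta\!\big(e^{(k-2)/(2k-2)}(\log e)^{1/(k-1)}\big)$. Thus the heuristic already predicts $\alpha = O(e^{(k-2)/(2k-2)})$ up to a polylogarithmic factor, and hence $\chi = \Omega(e^{1/(2k-2)} q/\mathrm{polylog})$, which is essentially the conjecture. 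The main obstacle is turning this heuristic into a theorem: because $q$ is polynomial in $n$, the cliques are large and sparse relative to the pairs they cover, the ``$k$-in-a-clique'' events are genuinely correlated in a partial Steiner system, and Janson- and Talagrand-type concentration degrade in exactly this regime --- which is where the current proof is forced into a lossy bound costing the factor $q^{2/(k-1)}$. I would attack this with a container / supersaturation argument, showing that once $|S|$ reaches the threshold it spans not one but $\Omega(\log(n/s))$-many $k$-in-a-clique configurations on average with strong enough concentration, so that independent $s$-sets number $o(\binom{n}{s})$ and $\alpha < s$ with high probability; controlling this supersaturation uniformly over the quasirandom host, and shaving the residual polylogarithm to reach a clean $\Theta$, is the heart of the matter.

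As a consistency check and a secondary route, note that for $k=3$ an independent set is precisely a cap of the associated linear space, so the conjecture predicts that a near-optimal packing of $q$-cliques on $\Theta(q\sqrt e)$ points has maximum cap size only $O(\sqrt e)$ --- in line with pseudorandom heuristics and strictly below what explicit projective constructions give, which explains why an algebraic construction alone seems insufficient. If the concentration difficulty proves intractable, the large-$e$ hypothesis leaves room to instead seek an explicit algebraic near-design of sufficiently high girth and bound $\alpha$ deterministically, trading the probabilistic obstacle for the construction of designs with provably small caps.
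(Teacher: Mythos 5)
This statement is presented in the paper as a \emph{conjecture}: the authors prove only the upper bound $f_1(e,q,k)=O(e^{1/(2k-2)}q)$ and the weaker lower bound $e^{1/(2k-2)}q^{1-2/(k-1)+o(1)}$ (Theorem~\ref{thm:main2}), and they explicitly leave the matching lower bound open. So there is no proof in the paper to compare against, and your proposal is not a proof either: it is a correct diagnosis of what would need to be shown, with the decisive step deferred. Your reductions are sound and match the paper's framework --- with $n=\Theta(q\sqrt{e})$ vertices the conjecture is equivalent to $\alpha=O(e^{(k-2)/(2k-2)})$ for a suitable linear $(q,k)$-system, and your first-moment exponent $q^{k-2}s^k/n^{k-2}$ correctly identifies the target threshold $s=\Theta(e^{(k-2)/(2k-2)}(\log e)^{1/(k-1)})$, i.e.\ $t\approx(n^{k-2}\log n/q^{k-2})^{1/(k-1)}$, whereas the paper's process analysis only reaches $t\gg(n^{k-2}\log n/q^{k-4})^{1/(k-1)}$; the discrepancy of $q^{2/(k-1)}$ is exactly the factor you name.

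The genuine gap is that the ``container / supersaturation argument'' you invoke to close this factor is precisely the open problem, and nothing in the proposal indicates how to carry it out. Concretely: in the paper's blocked-sets analysis, the loss comes from the terms $b_0<q^2t^k\binom{n}{q-k-2}$ and $b_1<qt^{k-1}\binom{n}{q-k-1}$ --- cliques meeting the candidate independent set $I$ in $0$ or $1$ points can still exclude many ``missing'' $q$-sets by intersecting them in $2$ points elsewhere, which forces $a_0=\Theta(n^2/q^4)$ and $a_1=\Theta(tn/q^2)$ and ultimately the $q^{k-4}$ in the threshold for $t$. Your first-moment heuristic treats the $k$-in-a-clique events as if governed by independent inclusion, but the hypergraph is generated by a constrained sequential process in which these exclusion effects are real, not an artifact of a lossy union bound; you would need either to show the process avoids this clustering (a statement about the $(q,k)$-process with $q$ polynomial in $n$ that is not currently known) or to construct a different quasirandom host with provably small independence number. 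Your fallback of an explicit algebraic near-design with small caps runs into the same wall: as the paper's Section 5 shows, even counting or bounding caps in the cleanest algebraic examples (affine planes) is open. So the proposal correctly locates the difficulty but does not resolve it; the statement remains a conjecture.
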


It is an interesting wide open problem to study the behavior of $f_1(e,q,k)$ for smaller values of $e$.
The smallest case $k=3$ naturally gives rise to a particular class of 1-$(e,q,3)$ systems, namely the collection of collinear triples in lines in finite projective planes. In fact, one possible way to obtain 1-$(q,3)$ systems is to take projective planes with few caps, and then take a random restriction of the points. While it is not clear how to obtain constructions in this manner that supersede the random greedy algorithm, it does suggest another fundamental question, namely to count the number of caps in a finite projective/affine plane.

 Define $I_q$ to be the maximum number of caps in an affine plane of order $q$. In a Desarguesian plane the normal rational curve yields a cap of size at least $q+1$ and since all subsets of a cap are caps, we have at least $2^{q+1}$ caps. However, in general affine planes, the largest known caps that are guaranteed to exist are of order $(q \log q)^{1/2}$ giving $I_q > 2^{c\, (q \log q)^{1/2}}$.
 On the other hand, the maximum size of a cap in any affine plane of order $q$ has size at most $q+2$, and this yields the bound $I_q < {q^2 \choose q+2} < 2^{c\, q \log q}$. We give improvements to both these bounds.

 \begin{theorem} \label{maincaps}
 	There is an absolute constant $c$ such that for all sufficiently large $q$ for which $I_q$ is defined, $$2^{c\,q^{1/2} (\log q)^{3/2}} < I_q < 2^{6q}.$$
 	\end{theorem}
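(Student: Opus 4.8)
Throughout I would use that a cap is exactly an independent set in the $3$-uniform hypergraph $\mathcal{T}$ on the $q^2$ points of the plane whose edges are the collinear triples. Each of the $q^2+q$ lines contributes $\binom{q}{3}$ edges, so $\mathcal{T}$ has $\Theta(q^5)$ edges and its independence number equals the maximum cap size, which is at most $q+2$. The two inequalities require opposite estimates on the number of independent sets of $\mathcal{T}$, so I would prove them by different means: a random greedy process for the lower bound and the container method for the upper bound. I emphasise that the lower bound I describe holds in \emph{every} affine plane of order $q$, so it bounds $I_q$ from below a fortiori.

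\textbf{Lower bound.} I would analyse the random greedy cap process: from the empty set, repeatedly add a uniformly random point that forms no collinear triple with two previously chosen points, and count the distinct caps produced. Write $A_t$ for the number of admissible points after $t$ steps. A point $z$ is killed exactly when it first lies on a secant of the current set, and for two random points the probability that they are collinear with a fixed $z$ is $(q-1)/(q^2-1)\approx 1/q$, a count valid in every plane. Hence one expects $\mathbb{E}[A_t]\approx q^2 e^{-t^2/2q}$, which exceeds $q^{3/2}$ for all $t\le \sqrt{q\log q}$: the overlaps among the $\binom{t}{2}$ secants keep the process alive a factor $\sqrt{\log q}$ longer than the naive threshold $t\approx \sqrt q$. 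Granting $A_t\ge q^{3/2}$ for the first $m:=\lfloor\sqrt{q\log q}\rfloor$ steps, the number of admissible length-$m$ sequences is at least $\prod_{t<m}A_t\ge (q^{3/2})^m$, and passing to unordered caps by dividing by $m!$ gives
\[
 \#\{\text{caps}\}\ \ge\ \frac{(q^{3/2})^m}{m!}\ =\ 2^{(1+o(1))\,m\log_2 q}\ =\ 2^{\,\Omega(q^{1/2}(\log q)^{3/2})},
\]
which is the claimed bound.

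\textbf{Upper bound.} For $I_q<2^{6q}$ I would count independent sets of $\mathcal{T}$ with the hypergraph container method. The needed supersaturation is elementary: fixing one parallel class and using convexity, a set of $m$ points meeting the $q$ lines of that class in $m_1,\dots,m_q$ points spans at least $\sum_i\binom{m_i}{3}\ge q\binom{m/q}{3}$ collinear triples, which is of order $m^3/q^2$; thus any set of size $\gg q$ already carries $\Omega(q)$ edges of $\mathcal{T}$. Feeding this into the container lemma, I would aim to extract a family of at most $2^{4q}$ containers, each of size at most $2q$, with every cap lying inside one of them, whence
\[
 I_q\ \le\ 2^{4q}\cdot \sum_{j\le q+2}\binom{2q}{j}\ \le\ 2^{4q}\cdot 2^{2q}\ =\ 2^{6q}.
\]
The work hidden here is the bookkeeping that turns the generic $2^{O(q)}$ into the clean constant $6$: balancing the number of containers against their size, and verifying the supersaturation constants in an arbitrary, possibly non-Desarguesian, plane.

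\textbf{Main obstacle.} The crux is the lower bound, specifically making the survival estimate $A_t\ge q^{3/2}$ rigorous. Since the process is adaptive, $A_t$ is an intricate functional of the history, so one must prove concentration --- for instance via a supermartingale or Azuma argument tracking the admissible set, or a staged exposure --- while excluding the possibility that the chosen points conspire to erode the admissible set faster than the mean prediction. This is precisely the kind of constrained random hypergraph process highlighted in the introduction, and carrying it out in a completely arbitrary plane, where no algebraic regularity of the secant intersections is available, is the main difficulty.
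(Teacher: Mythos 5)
Your proposal has genuine gaps in both halves; in each case the asserted key estimate is exactly the hard part, and it is neither proved nor reducible to a routine computation.

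For the lower bound, everything rests on the survival estimate $A_t\ge q^{3/2}$ for $t$ up to $\sqrt{q\log q}$, which you yourself flag as the main obstacle. The naive deterministic bound only gives $A_t\ge q^2-\binom{t}{2}(q-2)$, which dies at $t\approx\sqrt{2q}$; the extra $\sqrt{\log q}$ factor is precisely the Ajtai--Koml\'os--Pintz--Spencer--Szemer\'edi ``uncrowded hypergraph'' phenomenon, and extracting it for an adaptive greedy process (whose chosen points are not uniform, and where pairs of points have codegree $q-2$ in the collinear-triple hypergraph) is a substantial piece of work, not a concentration afterthought. The paper does not attempt this: it cites a black-box counting theorem of Cooper--Dutta--Mubayi giving $2^{cn(\log d)^{3/2}/d^{1/2}}$ independent sets in $3$-graphs of average degree $d$ and \emph{bounded codegree}, and the actual content of its proof is the reduction to that theorem --- a random restriction with $p=q^{-7/5-\epsilon}$ together with a hitting-set lemma that deletes $O(1)$ vertices to destroy all ``cherries'' and force codegree at most $2$. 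Your plan silently ignores the codegree issue, which is the very reason the paper cannot apply its counting theorem directly to the whole plane.

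For the upper bound, the supersaturation step (convexity over one parallel class) is fine, but the claim that the container method yields at most $2^{4q}$ containers each of size at most $2q$ is reverse-engineered and, as stated, unobtainable: once a candidate container has shrunk to size $O(q)$ it may still contain an entire line, so pairs inside it have codegree $\approx q$ while its average degree is $O(1)$, and the codegree hypotheses of every standard container lemma fail long before that scale. You would at best get containers of polylogarithmically larger size with $2^{O(\sqrt{q}\,\mathrm{poly}\log q)}$ fingerprints, and the bookkeeping needed to land on the constant $6$ is exactly what is missing. The paper's argument is entirely different and elementary: it enumerates caps as ordered sequences, uses the expander mixing lemma for the point--line incidence graph to show that after $i$ points are chosen the next point must lie in a set of size at most $q^3/\binom{i}{2}$, and bounds $I_{q,t}\le q^4\prod_{i=2}^{t-1}\min\{q^2,q^3/\binom{i}{2}\}/t!$, optimizing to get $2^{5.77q+O(\sqrt{q}\log q)}$. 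If you want a self-contained proof I would recommend adopting that enumeration argument rather than trying to force containers down to linear size.
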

	
 	There is still a huge gap between our upper and lower bounds for $I_q$, however, as mentioned earlier, if we restrict to the case of Desaguesian/Galois planes, our upper bound is much closer to the truth. 
 Define $I'_q$ to be the maximum number of caps in a Galois affine plane $A(2,q)$. Thus $I'_q \le I_q$ and perhaps equality holds. Then Theorem~\ref{maincaps} yields $2^{q+1}< I'_q < 2^{6q}$. 

 \begin{conjecture}
 $I'_q = 2^{(1+o(1))q}$.
\end{conjecture}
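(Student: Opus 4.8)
The plan is to reinterpret $I'_q$ as the number of independent sets in a $3$-uniform hypergraph and then bound this count by splitting caps according to their size. Let $H$ be the \emph{collinearity hypergraph} on the $n=q^2$ points of $A(2,q)$, whose edges are the collinear triples. A cap is exactly an independent set of $H$, so $I'_q=i(H)$, the number of independent sets. Since a line meets a cap in at most two points, $\alpha(H)\le q+2$, and the lower bound $I'_q\ge 2^{q+1}$ (all subsets of one large arc) already supplies the matching lower half of the conjecture; thus the whole task is the upper bound $i(H)\le 2^{(1+o(1))q}$. The basic parameters of $H$ are: average degree $d\asymp q^3$ (each point lies on $q+1$ lines, each contributing $\binom{q-1}{2}$ triples), pair-degree $\Delta_2=q-2$ (two points determine a unique line), and triple-degree $\Delta_3=1$.

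First I would dispose of the two extreme size ranges, where sharp counts are available. For \emph{small} caps, $|S|\le q/\log q$, the trivial estimate $\sum_{m\le q/\log q}\binom{q^2}{m}=2^{(1+o(1))q}$ already suffices, since $\binom{q^2}{q/\log q}=2^{(1+o(1))q}$. For \emph{large} caps I would invoke Segre-type stability for arcs: for $q$ odd every arc of size at least $q-\sqrt q+2$ is contained in a conic, and there are only $O(q^5)$ conics; as each conic contains at most $q+1$ affine points, the number of caps of size $\ge q-\sqrt q+2$ is at most $O(q^5)\cdot 2^{q+1}=2^{(1+o(1))q}$. (For $q$ even one must instead control the number of maximal arcs, i.e.\ hyperovals; bounding this by $2^{o(q)}$ is a point needing separate care, which might force the argument to be stated first for $q$ odd.)

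The heart of the matter is the \emph{medium} range $q/\log q\le |S|\le q-\sqrt q$, where the trivial bound $\binom{q^2}{m}=2^{\Theta(q\log q)}$ overshoots by a $\log q$ factor in the exponent and no algebraic stability is available. Here I would deploy the hypergraph container method. A key supersaturation input, provable by a short double-counting argument, is that every set of $(1+\delta)q$ points spans at least $\tfrac13\delta q^2$ collinear triples: through each point the $m-1$ other points occupy at most $q+1$ directions, forcing at least $m-q-2$ collinear coincidences per point, hence at least $m(m-q-2)/3$ triples overall. With the parameters above the container degree conditions are met by $\tau\asymp q^{-3/2}$, so a single container step uses a fingerprint of size $\tau n\asymp q^{1/2}$; iterating $O(\log q)$ times to shrink the containers while keeping the total fingerprint $O(q^{1/2}(\log q)^2)=o(q)$ would yield a family of $2^{o(q)}$ containers covering all caps, each of size $(1+o(1))q$, giving $i(H)\le 2^{o(q)}\cdot 2^{(1+o(1))q}$ as required.

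The step I expect to be the main obstacle is precisely this last one: driving the container iteration down to containers of size $(1+o(1))q$, with the \emph{sharp} constant $1$ and not merely $O(q)$. As the induced subhypergraphs become sparse and, being almost independent, structured rather than quasirandom, the co-degree function degrades, $\tau$ grows, and it is delicate to certify that the fingerprints stay of total size $o(q)$ all the way down to size $(1+o(1))q$; the weakness of the supersaturation (only $\asymp\delta q^2$ triples near the threshold, against $e(H)\asymp q^5$) is exactly what makes the containers reluctant to shrink. This is the same phenomenon that makes the analogous sharp count of Sidon sets in $[n]$ due to Kohayakawa, Lee, R\"odl and Samotij difficult, and I would expect to import and adapt their bookkeeping, combining the container iteration for the medium range with the Segre input for the top range to recover the exact exponent $(1+o(1))q$.
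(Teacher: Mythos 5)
This statement is an open conjecture in the paper, not a theorem: the paper's own results give only $2^{q+1} < I'_q < 2^{6q}$ (the upper bound coming from Theorem~\ref{maincaps}, proved by enumerating caps point by point via the expander mixing lemma), and the sharp exponent is explicitly left open. Your sketch does not close it. The parts you get right --- the identification of $I'_q$ with the number of independent sets in the collinear-triple $3$-graph, the bound $\sum_{m\le q/\log q}\binom{q^2}{m}=2^{(1+o(1))q}$ for small caps, and the supersaturation count of at least $m(m-q-2)/3$ triples in any $m$-set --- are routine. The step you yourself flag as the ``main obstacle'' is not a technical hurdle but the entire content of the conjecture: with only $\asymp\delta q^2$ guaranteed triples in sets of size $(1+\delta)q$ (against $e(H)\asymp q^5$), the co-degree hypotheses of the container lemma degenerate once containers approach size $O(q)$ --- induced pair-degrees can still be $\asymp q$ while the induced average degree is only $\asymp\delta q$ --- so the iteration stalls at containers of size $Cq$ for some constant $C>1$, with fingerprints of total size $\Theta(q)$ rather than $o(q)$. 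What this yields is a bound of the form $2^{O(q)}$, i.e.\ exactly the kind of bound the paper already proves by a far shorter argument. Worse, the analogy you invoke cuts against you: Kohayakawa, Lee, R\"odl and Samotij do \emph{not} obtain a sharp count of Sidon sets; they prove only that the count lies between roughly $2^{1.16\sqrt n}$ and $2^{6.44\sqrt n}$, and their \emph{lower} bound shows that the naive analogue of this conjecture (that subsets of maximum Sidon sets dominate the count) is actually false in that setting. So there is no ``bookkeeping'' there to import, and their result should if anything make one cautious about whether the constant $1$ in this conjecture is correct at all.

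There are also two gaps in your treatment of large caps. Segre's stability theorem for odd $q$ applies to arcs of size exceeding roughly $q-\sqrt q/4+7/4$, not $q-\sqrt q+2$ as you state, so as written your medium and large ranges do not meet; this is fixable by moving the cut, at the cost of pushing the unresolved container range up to $q-\sqrt q/4$. Less fixable: for even $q$ your argument needs the number of hyperovals (equivalently, of maximum arcs) to be $2^{o(q)}$, and no such bound is known --- the best general upper bound is again of the shape $2^{O(q)}$, e.g.\ from Theorem~\ref{maincaps} itself --- so the even case reintroduces an undetermined constant in the exponent at the very top of the range, independently of the container difficulty.
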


\section{Random restrictions of Affine planes}
We will use the following version of the Chernoff bounds.

\begin{lemma} [Multiplicative Chernoff Bounds]
   Suppose $X_1, \ldots,  X_n$ are independent random variables taking values in $\{0, 1\}$. Let $X$ denote their sum and let $\mu = E[X]$ denote the  expected value of $X$. Then for any $\delta > 0$,
$$
\Pr(X\le(1-\delta)\mu) < \exp(-\delta^2\mu/2)$$
   $$
\Pr(X \ge (1+\delta)\mu) < \exp(-\delta^2\mu/(2+\delta)).$$
\end{lemma}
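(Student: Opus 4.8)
The plan is to use the standard exponential-moment (Bernstein--Chernoff) method, treating the two tails separately but symmetrically. For the upper tail, fix a parameter $t>0$ and note that since $x\mapsto e^{tx}$ is increasing, Markov's inequality applied to the nonnegative variable $e^{tX}$ gives $\Pr(X\ge(1+\delta)\mu)=\Pr\bigl(e^{tX}\ge e^{t(1+\delta)\mu}\bigr)\le e^{-t(1+\delta)\mu}\,E[e^{tX}]$. Writing $p_i=\Pr(X_i=1)$ and using independence, the moment generating function factors as $E[e^{tX}]=\prod_{i=1}^n E[e^{tX_i}]=\prod_{i=1}^n\bigl(1+p_i(e^t-1)\bigr)$. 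Applying the elementary inequality $1+x\le e^x$ to each factor yields $E[e^{tX}]\le\exp\bigl((e^t-1)\sum_i p_i\bigr)=\exp\bigl((e^t-1)\mu\bigr)$, where I have used $\sum_i p_i=E[X]=\mu$.

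Combining these bounds gives $\Pr(X\ge(1+\delta)\mu)\le\exp\bigl((e^t-1)\mu-t(1+\delta)\mu\bigr)$ for every $t>0$. The next step is to optimize the exponent over $t$; differentiating shows the minimum occurs at $t=\ln(1+\delta)$, which produces the classical ``raw'' Chernoff bound $\bigl(e^\delta(1+\delta)^{-(1+\delta)}\bigr)^\mu$. The lower tail is handled in the mirror-image way: apply Markov to $e^{-tX}$ with $t>0$, so that the same factoring together with $1+x\le e^x$ gives $E[e^{-tX}]\le\exp\bigl((e^{-t}-1)\mu\bigr)$, and then $\Pr(X\le(1-\delta)\mu)\le\exp\bigl((e^{-t}-1)\mu+t(1-\delta)\mu\bigr)$. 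Optimizing at $t=-\ln(1-\delta)$ (valid for $0<\delta<1$) yields the raw bound $\bigl(e^{-\delta}(1-\delta)^{-(1-\delta)}\bigr)^\mu$.

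The remaining work is to convert these raw bounds into the cleaner stated forms, which reduces to two one-variable calculus inequalities. For the upper tail I need $(1+\delta)\ln(1+\delta)-\delta\ge\delta^2/(2+\delta)$ for all $\delta>0$; for the lower tail I need $-\delta-(1-\delta)\ln(1-\delta)\le-\delta^2/2$ for $0<\delta<1$. The lower-tail inequality follows cleanly from the Taylor expansion $(1-\delta)\ln(1-\delta)=-\delta+\tfrac12\delta^2+\tfrac16\delta^3+\cdots$, in which every term beyond the first two carries the favorable sign, so the quantity in question is at most $-\delta^2/2$. I expect the upper-tail inequality to be the only mildly delicate point, since it does not reduce to a single sign-definite series; the cleanest route is to set $g(\delta)=(1+\delta)\ln(1+\delta)-\delta-\delta^2/(2+\delta)$, observe $g(0)=0$, and verify $g'(\delta)=\ln(1+\delta)-\delta(4+\delta)/(2+\delta)^2\ge0$ for all $\delta>0$ by comparing $\ln(1+\delta)$ against the rational function on the right (a short argument via the series near $0$ and monotonicity for large $\delta$). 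Substituting these two inequalities back into the raw bounds gives exactly the two displayed estimates, completing the argument.
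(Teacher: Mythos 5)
The paper never proves this lemma: it is quoted as the standard multiplicative Chernoff bound and used as a black box, so there is no internal proof to compare against. Your derivation is the classical exponential-moment (Markov on $e^{\pm tX}$) argument, and it is correct: the factorization of the MGF, the bound $1+x\le e^x$, the optimizing choices $t=\ln(1+\delta)$ and $t=-\ln(1-\delta)$, and the two conversion inequalities are all right, and your series argument for the lower tail is clean and complete. Two small points to tighten. First, the one step you leave as a sketch, $g'(\delta)=\ln(1+\delta)-\delta(4+\delta)/(2+\delta)^2\ge 0$, is most easily closed without any calculus on $g$: use the standard inequality $\ln(1+\delta)\ge \frac{2\delta}{2+\delta}$ (both sides vanish at $0$ and the difference of derivatives is $\frac{1}{1+\delta}-\frac{4}{(2+\delta)^2}=\frac{\delta^2}{(1+\delta)(2+\delta)^2}\ge 0$), which immediately gives $(1+\delta)\ln(1+\delta)-\delta\ge \frac{2\delta(1+\delta)}{2+\delta}-\delta=\frac{\delta^2}{2+\delta}$, exactly the upper-tail conversion you need. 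Second, your lower-tail optimization is only valid for $0<\delta<1$; for $\delta\ge 1$ the claim is trivial, since then $(1-\delta)\mu\le 0$, so the event has probability $0$ when $\delta>1$ (as $X\ge 0$), and when $\delta=1$ it is $\Pr(X=0)=\prod_i(1-p_i)\le e^{-\mu}\le e^{-\mu/2}$. (If one insists on the strict inequalities as stated, they follow for $\mu>0$ because your conversion inequalities are strict for $\delta>0$; the degenerate case $\mu=0$ is a defect of the statement, not of your proof.) Neither point is a genuine gap.
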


{\bf Proof of Theorem~\ref{thm:main}.} By the prime number theorem, there is a prime number $Q\ge q$ such that $(1/2)e < Q^k+Q \le e$. Let $V=\mathbb F_Q \times \mathbb F_Q$. Given a  polynomial $p(x)$  over $\mathbb F_Q$ let $S(p(x))=\{p(x): x \in \mathbb F_Q\}$. Let $H_Q$ be the $Q$-graph with vertex set $V$ and edge set $$\{S(p(x)): \deg(p(x)) < k)\} \cup \{C_x: x \in \mathbb F_Q\}$$
 where $C_x=\{(x,y):y \in \mathbb F_Q\}$ is the column of $x$. The number of edges in $H_Q$ is $Q^k +Q\le e$.

Let $W$ be a random subset of $V$ obtained by picking each element of $V$ independently with probability 
$p = \frac{q}{10Q}$. Given an edge $f \in H_Q$, the expected size of $f \cap W$ is $q/10$ and the multiplicative Chernoff bound with $\mu= q/10$ and $\delta=9$ implies that the probability that $|f \cap W|>q$ is at most
$\exp(-81q/110)$. The number of edges $f$ in $H_Q$ is  $Q^k+Q\le e<2^q$, so the union bound implies that
$$\Pr(\exists f: | f\cap W|>q)< 2^q\exp(-81q/110)<(0.96)^q.$$ For $C$ a sufficiently large constant, the probability that $|W|<pQ^2/C$ is by the multiplicative Chernoff bound (with $\mu =pQ^2$ and  $\delta=1-1/C$) at most 
$$\exp(-0.49pQ^2)=\exp(-0.049qQ).$$ Since $e>(50q)^k$, we have $Q>(e/4)^{1/k}\ge 25q$, and $\exp(-0.049qQ)<\exp(-q^2)$. Since $(0.96)^q+\exp(-q^2)<1$, there is a set $W$ such that $|f \cap W|\le q$ for all $f$ and $|W|=\Omega(qQ)$.
Let $H'$ be the $k$-graph with vertex set $W$ whose edges  are the $Q^k+Q$ cliques ${f \cap W \choose k}$. 

 We now prove that $\alpha(H')\le (k-1)^2+1$.
 
{\bf Claim.} Given $(x_1, y_1), \ldots, (x_k, y_k)$ in $\mathbb F_Q \times \mathbb F_Q$ with the $x_i$s distinct, there is a (unique) polynomial $p(x)$ of degree less than $k$ with $p(x_i)=y_i$ for $i \in [k]$.

{\bf Proof of Claim.} Write $p(x) = \sum_{j=0}^{k-1}a_jx^j$. Then the conclusion of the Claim is equivalent to the matrix equation $Ba=y$, where $B$ is the $k$ by $k$ Vandermonde matrix with parameters $x_1, \ldots, x_k$, $a=(a_0, \ldots, a_{k-1})^T$ and $y=(y_1, \ldots, y_{k})^T$. Since the $x_i$s are distinct, $B$ is invertible and hence there is a unique solution $a$.
\qed
 
 Pick a set $I\subset W$ of vertices of size at least $(k-1)^2+1$. If $I$ has at least $k$ vertices in some column $C_x$, then these $k$ vertices lie in $C_x \cap W$ and hence lie in an edge of $H'$. Therefore, by the pigeonhole principle,  $I$ has at least $k$ vertices in distinct columns. By the Claim, these $k$ vertices lie in a unique $S(p(x))$ and hence lie in an edge of $H'$. This proves that  $\alpha(H')\le (k-1)^2+1$. 
 
 Finally, we modify $H'$ by adding $q-|f \cap W|$ new vertices to each set $f \cap W$ so as to make a clique of size exactly $q$. This produces an $(e,q,k)$-system
with chromatic number at least $|W|/k^2 = \Omega(qQ)= \Omega(e^{1/k}q)$ as required.\qed

\section{The case $\ell =1$}

Here we prove the upper bound   in Theorem~\ref{thm:main2}.  Note that the number of $k$-sets in a $(q,k)$-system where the number of $q$-cliques is $e$ is $e{q \choose k}$ so the trivial bound is $f_1(e,q,k)=O(e^{1/k} q)$. 
\begin{theorem}
	For each fixed $k \ge 3$, we have
	$f_1(e,q,k)  = O(e^{1/(2k-2)}q)$.
	\end{theorem}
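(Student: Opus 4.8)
The plan is to colour the vertex set with a palette of $t \asymp e^{1/(2k-2)} q$ colours, chosen uniformly and independently at random, and then to repair the few resulting monochromatic edges using a small number of extra colours. The choice of exponent is forced by the following principle, which also explains the shape of the bound. The underlying $k$-graph has $m = e\binom qk$ edges, and every pair of vertices lies in at most one clique and hence in at most $\Delta_2 := \binom{q-2}{k-2}$ edges, so its maximum pair-degree is $\Delta_2$. For $k=2$ the trivial inequality $\binom{\chi}{2}\le m$ gives $\chi = O(\sqrt m)$, and the target is exactly the $k$-uniform analogue $\chi = O\bigl((m\Delta_2)^{1/(2(k-1))}\bigr)$, since $(m\Delta_2)^{1/(2(k-1))} \asymp (e\,q^{2k-2})^{1/(2(k-1))} = e^{1/(2k-2)} q$. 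Taking $t$ of this order makes monochromatic edges rare: the probability that a fixed $k$-subset of a clique is monochromatic is $t^{1-k}$, so the expected number of monochromatic edges inside a single clique is $\binom qk t^{1-k} \asymp q\, e^{-1/2}/C^{k-1}$, and summed over the $e$ cliques the expected total is $\asymp e^{1/2} q / C^{k-1}$.

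First I would make the per-clique statement precise. For a clique $f$ define its defect $\mathrm{def}(f) = \sum_c (m_{f,c} - (k-1))^+$, the number of vertices one must recolour so that every colour appears at most $k-1$ times in $f$, where $m_{f,c}$ is the number of colour-$c$ vertices in $f$. Since $\lambda := q/t = (C e^{1/(2k-2)})^{-1}$ is tiny, each multiplicity $m_{f,c}$ concentrates near $\lambda$ and large defects are very unlikely. Using the multiplicative Chernoff bound to control the colour multiplicities, and then a union bound over the $e$ cliques, I would show that with positive probability every clique has small defect; in the cleanest regime, where $e$ is polynomially larger than $q$, that every clique satisfies $\mathrm{def}(f)\le k-1$. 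In that case a single extra colour suffices: recolouring all defect vertices with one new colour leaves, in each clique, every old colour used at most $k-1$ times and the new colour used at most $\mathrm{def}(f)\le k-1$ times, so no edge is monochromatic and we have used $t+1 = O(e^{1/(2k-2)} q)$ colours.

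The main obstacle is carrying out the repair across the full range of $e$. The union bound needs $\Pr[\mathrm{def}(f)\ge k]$ to beat $1/e$, which a direct tail estimate yields once $e$ is polynomially larger than $q$, but for smaller $e$ the one-colour repair is too crude because many cliques carry genuine defects. The fix I would pursue is to recolour the defect vertices not with one colour but with a fresh palette that properly colours the \emph{defect subsystem}, namely the cliques restricted to the recoloured vertices. The virtue of using $t \asymp e^{1/(2k-2)} q$ colours is precisely that this subsystem is extremely sparse: the expected defect per clique is $o(q)$, so its cliques have size much smaller than $q$, and via the same Chernoff-plus-union-bound argument one shows its maximum clique size $s$ is small enough that recolouring it costs only $O\bigl(e^{1/(2k-2)} s\bigr)$ further colours. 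Since $s\ll q$, iterating a bounded number of times drives the clique size below $k$, where the subsystem has no edges, and the geometric series of palette sizes keeps the total at $O(e^{1/(2k-2)} q)$. The delicate points, which I expect to be the crux, are the tail estimate for $\mathrm{def}(f)$ driving the union bound and the verification that the defect clique size $s$ contracts quickly enough for the iteration to terminate and sum to the claimed bound.
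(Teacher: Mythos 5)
There is a genuine gap in the repair step, and it is precisely the difficulty that the paper's proof is organized around. When you recolour ``the defect vertices'' with a fresh palette, the set of recoloured vertices is $D=\bigcup_g D_g$, the union of the defect sets over \emph{all} cliques, and the subsystem you must then properly colour has cliques $D\cap f$, not $D_f$. Your Chernoff estimate controls only the own-defect: indeed $\mathbb{E}\,|D_f|=O(q\lambda^{k-1})=O(q/(C^{k-1}e^{1/2}))=o(q)$ with $\lambda=q/t$. But a vertex $v\in f$ also lands in $D$ whenever its colour class is over-represented in some \emph{other} clique through $v$, which happens with probability roughly $1-(1-\lambda^{k-1})^{\deg(v)}$, where $\deg(v)$ is the number of cliques containing $v$. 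Since $\lambda^{k-1}\asymp e^{-1/2}$, every vertex lying in $\gg e^{1/2}$ cliques is recoloured almost surely, and a $1$-$(q,k)$-system can perfectly well contain cliques all $q$ of whose vertices have clique-degree $\gg e^{1/2}$ (this is consistent with the global degree sum $eq$). For such a clique $|D\cap f|=(1-o(1))q$, the defect subsystem still has cliques of size $q$, the contraction $s\ll q$ fails, and the iteration never gets started. Choosing cleverly \emph{which} members of each over-represented class to recolour does not obviously help: the expected number of over-represented (clique, colour) pairs is of order $qe^{1/2}/C^{k-1}$, so a constant fraction of the whole vertex set must be recoloured, and nothing in your argument prevents this mass from concentrating on the cliques of high-degree vertices. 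A secondary, quantitative problem: even when $e$ is a large power of $q$, the union bound cannot give $\mathrm{def}(f)\le k-1$ for every clique, because the expected number of cliques containing a single colour repeated $2k-1$ times (hence with defect $\ge k$) is of order $e\cdot q\lambda^{2k-2}\asymp q/C^{2k-2}\to\infty$; at best one gets $\mathrm{def}(f)=O_k(1)$, and even then the one-colour repair needs $|D\cap f|\le k-1$, which runs into the same spillover issue.

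The paper's proof takes a different route that is built exactly to neutralize the high-degree vertices that break your iteration. It splits $V(H)$ into $A$, the vertices of $k$-graph degree at most $d:=e^{1/2}q^{k-1}$, and $B=V(H)\setminus A$. The Lov\'asz Local Lemma colours $H[A]$ with $O(d^{1/(k-1)})=O(e^{1/(2k-2)}q)$ colours; counting gives $|B|=O(e^{1/2}q)$; and the pairwise-intersection-one hypothesis forces the cliques through a fixed $v\in B$ to meet $B$ in sets that are disjoint apart from $v$, whence $\Delta(H[B])=O(d)$ as well and $H[B]$ is coloured with a second disjoint palette of the same size. If you want to retain the random-colouring-plus-repair framework, you would need to incorporate some such preliminary treatment of the vertices of clique-degree $\gg e^{1/2}$ before any contraction argument can go through.
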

\begin{proof} Let $H$ be a $(q,k)$-system with $e$ edges. Put $d:=e^{1/2} q^{k-1}$. Let  $A=\{v \in V(H): d(v)\le d\}$ and $B = V(H)\setminus B$. Since $\Delta(H[A]) \le d$, the local lemma implies that there is a proper coloring of $H[A]$ with at most $O(d^{1/(k-1)}) = O(e^{1/(2k-2)}q)$ colors.
%If there are vertices in $A$ that lie in no edge of $H[A]$ then give them all a single new color. 
Since 
	$$ke{q \choose k} \ge \sum_{v \in B} d(v)  \ge |B| d = |B|e^{1/2} q^{k-1}$$ we obtain $|B|  = O(e^{1/2}q)$. Now consider $v \in B$. The edges in $H$ containing $v$ that lie within $B$ are all in subsets of $q$-cliques  containing $v$. Let $A_1, \ldots, A_p$ be the set of $q$-cliques containing $v$ that have at least $k$ vertices in $B$ and let $a_i=|A_i \cap B|$. Then the degree of $v$ in $H[B]$ is $\sum_i {a_i-1 \choose k-1}$.   Since $A_i \cap A_j = \{v\}$, we have $\sum (a_i-1) < |B| = O(e^{1/2}q)$. The quantity $\sum_i {a_i-1 \choose k-1}$ subject to this constraint is maximized when as many of the $a_i$ are as large as possible and the rest are as small as possible. Since $a_i \le q$, we obtain
	$$\sum_i {a_i-1 \choose k-1} \le \frac{|B|-1}{q-1} {q-1 \choose k-1} = O (e^{1/2}q^{k-1}) = O(d).$$
	Hence the maximum degree of $H[B]$ is  $O(d)$ and once again we can properly color $H[B]$ with $O(e^{1/(2k-2)}q)$ colors. %If there are vertices in $B$ that lie in no edge of $H[B]$, then color them with a single new color.
 We always use colors that have not been used in $H[A]$. In particular, this implies that if there is a $k$-set that has vertices in both $A$ and $B$, then it will not be monochromatic in our coloring.  The resulting coloring is a proper coloring of $H$ with $O(e^{1/(2k-2)}q)$   colors. 
	\end{proof}

{\bf Remark.} A further improvement by polylog factors can be achieved above using more advanced results to color the vertices in $B$.

\section{Lower bound for 1-$(q,k)$ systems using random greedy}
In this section we prove the lower bound in Theorem~\ref{thm:main2}: $f(e,q,k) = \Omega^*(e^{1/(2k-2)}
			q^{1-2/(k-1)})$ for $e > q^{2k+2}$ where the $*$ indicates some extra polylog factors in $e$.

Fix $k \ge 3$. Consider the random greedy $(q,k)$-process: We pick a $q$-set $e_1$ of $[n]$ at random. Given that we have picked $e_1, \ldots, e_i$, we pick a $q$-set $e_{i+1}$ randomly (with equal probability) from all other $q$-sets that do not intersect any of $e_1, \ldots, e_i$ in more than one point. Eventually we obtain a (random) $q$-graph $G^q$ with $e$ edges, and also a random $k$-graph $H=H^k = \cup_{i=1}^e {e_i \choose k}$, which has $e{q \choose k}$ edges.

Fix a $t$-set $I$ of $[n]$ and let us calculate the probability that $I$ is an independent set in $H^k$. For $i \ge 0$, let 
$$W_j=\left\{ S \in {[n] \choose q}: |S \cap I|=j\right\}$$
and put $M = \cup _{j=k}^q W_j$ and $m:=|M|$. Here $M$ stands for {\it missing} $q$-sets since these $q$-sets cannot be present in $G^q$ as $I$ is an independent set in $H^k$. Note that
$w_j:= |W_j| \le {t \choose j} {n \choose q-j}$. Say that  $A \in W_j$ {\em blocks}  $B \in M$ if  $|A \cap B| \ge 2$ and let $b_j$ be the number of sets in $M$ blocked by an $A \in W_j$ (it is the same for all $A$). 
We write $f \gg g$ to mean that there is a (large) positive constant $c=c_k$ such that $f \geq cg$. From now on, we assume that
$$n \gg tq^2 \qquad \hbox{ and } \qquad t \gg 10k\left(\frac{n^{k-2}\log n
}{q^{k-4}}\right)^{\frac{1}{k-1}}.$$
We then have
$$b_0 < q^2 t^k {n \choose q-k-2}$$
$$b_1 \le q{t \choose k-1} {n \choose q-k-1} + q^2t^{k}{n \choose q-k-2} < qt^{k-1} {n \choose q-k-1}$$
and for $2\le j \le k-1$,
$$b_j \le {j \choose 2}{t \choose k-2}{n \choose q-k} + jqt^{k-1}{n \choose q-k-1} + q^2t^k{n \choose q-k-2} < 2t^{k-2}{n \choose q-k},$$
where we use $n/tq^2 \rightarrow \infty$ in the last two displays. Note that $m= \Theta(t^k{n \choose q-k})$. 
Let $a_j = m/b_j$ so that
$$a_0 = \Theta\left(\frac{n^2}{q^4}\right) \qquad a_1 = \Theta\left(\frac{tn}{q^2}\right) \qquad 
a_j = \Theta(t^2) \qquad \hbox{ for $2\le j \le k-1$}.$$

{\bf Claim.}  If $I$ is an independent set, then at least $a_j/k $ edges of $W_j$ must be present in $G^q$ for some $0 \le j \le k-1$. 

{\bf Proof.} If not,  since no set of $M$ is in $G^q$, the number of sets in $M$ that are blocked is less than $\sum_{j=0}^{k-1} b_j(a_j/k) \le  m$. This means that some edge of $M$ would be in $G^q$, contradicting the fact that $I$ is an independent set. 

 Let $G_i=G_i^q$ be the (random) $q$-graph obtained after $i$ edges have been added and let $e_i$ be the $q$-set added at step $i$. Define 
$$\ell := \min\{i: |W_j \cap G_i| \ge a_j/3k \hbox{ for some $j=0,1,\ldots, k-1$}\}.$$
	In words, $\ell$ is the smallest index such that $G_i$ contains at least $a_j/3k$ edges from $W_j$ for some $j$. 

For $0\le j \le k-1$, let $A_j$ be the event that $I$ is an independent set and $e_{\ell} \in W_j$. The Claim, the definition of $\ell$, and the union bound imply that 
$$\Pr(I \hbox{ is independent}) \le  \sum_{j=0}^{k-1} \Pr(A_j).$$ 
 Let $m_i=|M \setminus G_i|$ . Since $|W_j \cap G_{\ell}|\le a_j/3k$ for each $j \in \{0,1,\ldots, k-1\}$, we have for $i \le \ell$, 
 $$m_i \ge m_{\ell} \ge m- \sum_{j=0}^{k-1}b_j(a_j/3k) \ge m - km/3k > m/2.$$
  For each $S \subset \{1, \ldots, e\}$, define the event
$$A_0(S) := \{G^q \in A_0: e_i \in W_0  \Leftrightarrow i \in S\}.$$ 
These events are disjoint for distinct $S$ and hence
$$\Pr(A_0) = \sum_{S \subset [e]} \Pr(A_0(S)).$$
If $\Pr(A_0(S))>0$, then since $I$ is an independent set there is a subset $S' \subset S$ with $|S'|\ge a_0/3k$  and $m_i \ge m/2$ for all $i \in S'$. Hence we can further write
$$\Pr(A_0) = \sum_{|S| \ge a_0/3k} \Pr(A_0(S)).$$
Write  $w_{j,i}=|W_j \setminus G_i|$ and $r_i = w_{1,i}+\cdots + w_{k-1,i}$  for $j=0,1,\ldots, k-1$. Then
\begin{align*} \Pr(A_0) 
&\le \sum_{|S| \ge \frac{a_0}{3k}} \left(\prod_{i \in S} \Pr(e_i \in W_0) \prod_{i \not \in S} \Pr(e_i\in W_1 \cup \cdots \cup W_{k-1})\right) \\
&=\sum_{|S| \ge \frac{a_0}{3k}} \prod_{i \in S} \left(\frac{ w_{0,i}}{w_{0,i}+m_i+r_i}\right)  \prod_{i \not\in S}
\left(\frac{r_i}{w_{0,i} +m_i+r_i}\right)\\
& = \sum_{|S| \ge \frac{a_0}{3k}} \prod_{i \in S} \left(\frac{ w_{0,i}}{w_{0,i}+m_i}\right) 
\left(\frac{w_{0,i}+m_i}{w_{0,i}+ m_i+r_i}\right)\prod_{i \not\in S}
\left(\frac{r_i}{w_{0,i} +m_i+r_i}\right) \\
& = \sum_{|S| \ge \frac{a_0}{3k}} \prod_{i \in S} \left(1-\frac{ m_i}{w_{0,i}+m_i}\right) 
\left(\frac{w_{0,i}+m_i}{w_{0,i}+ m_i+r_i}\right)\prod_{i \not\in S}
\left(\frac{r_i}{w_{0,i} +m_i+r_i}\right) \\
&\le \exp\left(-\frac{m a_0}{6kw_0}\right)
	\sum_{|S| \ge \frac{a_0}{3k}} \prod_{i \in S}
	\left(\frac{w_{0,i}+m_i}{w_{0,i}+ m_i+r_i}\right)\prod_{i \not\in S}
	\left(\frac{r_i}{w_{0,i} +m_i+r_i}\right) \\
	& \le \exp\left(-\frac{m a_0}{6kw_0}\right)
	\sum_{S \subset [e]} \prod_{i \in S}
	\left(\frac{w_{0,i}+m_i}{w_{0,i}+ m_i+r_i}\right)\prod_{i \not\in S}
	\left(\frac{r_i}{w_{0,i} +m_i+r_i}\right) \\
	&= \exp\left(-\frac{m a_0}{6kw_0}\right)
\prod_{i =1}^e
	\left(\frac{w_{0,i}+m_i}{w_{0,i}+ m_i+r_i}+\frac{r_i}{w_{0,i} +m_i+r_i}\right) \\
	&=\exp\left(-\frac{m a_0}{6kw_0}\right).
\end{align*}

We observe that ${n \choose t} \Pr(A_0) < 1/k$ since this follows from
$$\frac{ma_0}{6kw_0} = \frac{m^2}{6kb_0w_0}
> \frac{[{t \choose k}{n \choose q-k}]^2}{6k q^2 t^k {n \choose q-k-2} {n \choose q}} > t \log n$$
which holds due to $t^{k-1} \gg 10k n^{k-2}\log n / q^{k-4}.$
Similarly, ${n \choose t} \Pr(A_1) < 1/k$ follows from 
$$\frac{m^2}{6kb_1w_1}
> \frac{[{t \choose k}{n \choose q-k}]^2}{6k q t^{k-1} {n \choose q-k-1} t {n \choose q-1}} \gg  t \log n$$
using the weaker bound $t^{k-1} \gg n^{k-2} \log n/q^{k-3}$. Finally,
${n \choose t} \Pr(A_j) < 1/j$ for each $2\le j \le k-1$ follows from 
$$\frac{m^2}{6kb_jw_j}
> \frac{[{t \choose k}{n \choose q-k}]^2}{12k t^{k-2+j} {n \choose q-k} {n \choose q-j}} \gg  t \log n.$$
This is equivalent to
$$t > \left(\frac{n^{k-j}\log n}{q^{k-j}}\right)^{\frac{1}{k+1-j}}.$$

	Using  $t \gg 10k\left(\frac{n^{k-2}\log n
	}{q^{k-4}}\right)^{\frac{1}{k-1}}$ this follows from 
$$\left(\frac{n^{k-2}\log n}{q^{k-4}}\right)^{k+1-j} \gg
	\left(\frac{n^{k-j}\log n}{q^{k-j}}\right)^{k-1}$$
		which is equivalent to
		$n^{j-2} \gg  (\log n)^{j-2} q^{3j-2k-4}$ and this is trivial using $n>q$ and $j<k$.

So with positive probability  $\alpha(H^k)<t$.

Let $e$ be the number of edges in $G^q$. Certainly $e < n^2/q^2$ since $G^q$ is a $(q,k)$-system and we may add edges arbitrarily so that $e=n^2/q^2$. Setting $n = C tq^2$ for large $C$ we obtain that 
$$\chi(H^k) \ge \frac{n}{t} =\Omega\left(\left(\frac{n q^{k-4}}{\log n}\right)^{\frac{1}{k-1}}\right) =
\Omega^*\left(
{e^{\frac{1}{2k-2}}
		q^{1-\frac{2}{k-1}}
		}
		\right).$$
	Consequently, $f(e,q,k)=
	\Omega^*(
	e^{1/(2k-2)}
			q^{1-2/(k-1)})$ and a short calculation solving for $t$ and $n$ in terms of $q$ yields that this holds for $e>q^{2k+2}$	\qed
		
		In the proof above we used the trivial bound $e <n^2/q^2$. We do not know that this is tight. The value of $e$ could theoretically be as low as $n^2/q^3$ in which case our lower bound on $f(e,q,k)$ would be $\Omega^*(e^{1/(2k-2)}q^{1-3/(2k-2)})$. It is a very interesting question to study the final number of edges in the $(q,k)$-process when $q$ polynomially related to $n$.  
		
		The random greedy process above requires $e$ large while our upper bound $f(e,q,k) = O(e^{1/(2k-2)}
			q)$ applies for all $e$, so it is interesting to study the situation for $e<q^{2k+2}$. Taking the collinear triples in a projective plane quickly yields 
		$$f(q^2+q+1, q, 3)\ge (q^2+q+1)/(q+2) >q-1 = \Omega(e^{1/4}q^{1/2})$$ for $e=\Theta(q^2)$.

\section{Counting caps in affine/projective spaces}

In this section we prove Theorem~\ref{maincaps}.  We require the following lemma from~\cite{CM}. Recall that $\tau(F)$ is the minimum size of a vertex subset of $F$ that intersects every edge of $F$.

 \begin{lemma}\label{hittinglemma}
 	Suppose $F$ is an $s$-uniform hypergraph,
 	and $z_i$, $i \in V(F)$ are independent random indicator variables with $\Pr[z_i = 1] = p$, 
 	for all $i \in V(F)$.
 	Let 
 	\[
 	F' = \{A \in F: \forall i \in A, z_i=1\}.
 	\]
 	Suppose there exists $\alpha > 0$ such that $|F|p^{s(1-\alpha)} < 1$.
 	Then for any $c \geq e2^ss\alpha$,
 	\[
 	\Pr[\tau(F') > s^2(c/\alpha)^{s+1}] \leq s^2 |V(F)|^{s-1}p^c.
 	\]
 \end{lemma}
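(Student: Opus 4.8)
The plan is to bound $\tau(F')$ through the matching number $\nu(F')$ of the random subhypergraph and then apply a crude union bound over matchings of $F$. The starting point is the standard inequality $\tau(F') \le s\,\nu(F')$, valid for any $s$-uniform hypergraph: a maximum matching of $F'$ occupies exactly $s\,\nu(F')$ vertices and, by maximality, every edge of $F'$ meets one of these vertices, so they form a cover. Since $F'\subseteq F$ is $s$-uniform this applies, and the event $\tau(F')>s^2(c/\alpha)^{s+1}$ forces $\nu(F') \ge \tau(F')/s > s\,(c/\alpha)^{s+1}$. In other words, there must exist $m:=\lceil s(c/\alpha)^{s+1}\rceil$ pairwise disjoint edges of $F$ all of whose $sm$ distinct vertices survive the random deletion.

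First I would estimate the probability that some large matching survives. A fixed matching of size $m$ survives with probability exactly $p^{sm}$ (its vertices are distinct and the $z_i$ are independent), and the number of candidate $m$-matchings is at most $\binom{|F|}{m}\le |F|^m$. The union bound then gives
\[
\Pr[\nu(F')\ge m] \le |F|^m p^{sm} = (|F|p^{s})^{m}.
\]
Here the hypothesis is used in exactly one place: multiplying $|F|p^{s(1-\alpha)}<1$ by $p^{s\alpha}$ yields $|F|p^{s}<p^{s\alpha}$, whence $(|F|p^{s})^{m}<p^{s\alpha m}$.

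It remains to verify that the chosen $m$ makes the exponent at least $c$. A short computation gives $s\alpha m \ge s\alpha\cdot s(c/\alpha)^{s+1}= s^{2}c^{\,s+1}/\alpha^{s}=s^{2}c\,(c/\alpha)^{s}$, and because $c\ge e2^{s}s\alpha$ forces $c/\alpha\ge 1$ we have $s^{2}(c/\alpha)^{s}\ge 1$, so $s\alpha m\ge c$. Since $0<p<1$ this gives $p^{s\alpha m}\le p^{c}$, and enlarging by the harmless factor $s^{2}|V(F)|^{s-1}\ge 1$ produces the claimed bound $\Pr[\tau(F')>s^{2}(c/\alpha)^{s+1}]\le s^{2}|V(F)|^{s-1}p^{c}$.

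The notable feature of this route is that it \emph{appears} to prove considerably more than stated: taking $m\approx c/(s\alpha)$ already drives the failure probability below $p^{c}$ while yielding a cover of size only $O(c/\alpha)$, so along this path neither the factor $(c/\alpha)^{s+1}$ nor the polynomial $|V(F)|^{s-1}$ is needed. I therefore expect that the proof quoted from \cite{CM} proceeds by a different, more modular mechanism -- most plausibly an iterative greedy peeling of the surviving hypergraph over about $s$ rounds, which would naturally generate the exponent $s+1$, the $2^{s}$ appearing in the constraint on $c$, and the $|V(F)|^{s-1}$ as a union-bound over an $(s-1)$-tuple ``fingerprint'' recorded each round, in the spirit of the hypergraph container method. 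If one insisted on reproducing that precise form rather than the matching bound, the genuine obstacle would be \textbf{degree control across rounds}: one must argue that after deleting the high-degree vertices in a round the residual surviving hypergraph stays sparse enough for the next round to remove only boundedly many vertices, and that these per-round bounds compound to $(c/\alpha)^{s+1}$ instead of blowing up. The matching argument avoids this difficulty altogether, so I would adopt it as the cleanest path, observing that it yields at least the stated inequality.
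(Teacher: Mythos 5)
The paper offers no proof of Lemma~\ref{hittinglemma}: it is quoted verbatim from \cite{CM}, so there is nothing in this document to match your argument against and it must be judged on its own terms. It is correct. The chain $\tau(F')\le s\,\nu(F')$ (the vertex set of a maximum matching is a cover, by maximality), the union bound $\Pr[\nu(F')\ge m]\le\binom{|F|}{m}p^{sm}\le(|F|p^{s})^{m}$ (a fixed matching of size $m$ occupies $sm$ distinct vertices, and matchings are among the $m$-subsets of $F$), the deduction $|F|p^{s}<p^{s\alpha}$ from the hypothesis, and the check $s\alpha m\ge s^{2}c(c/\alpha)^{s}\ge c$ using $c/\alpha\ge e2^{s}s>1$ are all sound; the degenerate cases $p\in\{0,1\}$ and $V(F)=\emptyset$ are vacuous. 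This is a genuinely different route from the one in \cite{CM}, whose constants (the exponent $s+1$ in $(c/\alpha)^{s+1}$, the $2^{s}$ in the constraint on $c$, the $|V(F)|^{s-1}$ prefactor) indeed reflect an iterative argument over the uniformity of the kind you describe; your single union bound over matchings sidesteps the degree-control issue entirely and in fact proves the strictly stronger statement $\Pr[\tau(F')>c/\alpha+s]\le p^{c}$, with no dependence on $|V(F)|$, of which the stated lemma is a weakening. Since the lemma is invoked in Section 5 only with $s=5$ and constant $c,\alpha$ to extract an $O(1)$-size set $S$ hitting all surviving cherries with failure probability $o(1)$, your version serves the application at least as well.
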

 \bigskip
 
 The reason we need to use the lemma above is to take care of codegrees that may be of logarithmic size (the codegree of a pair of vertices is the number of edges containing them both). We also need the following result from~\cite{CDM}; the result as stated in~\cite{CDM} applies to linear hypergraphs but standard methods imply the same bound for hypergraphs with bounded maximum codegree. 

 \begin{theorem} [\cite{CDM}, Theorem 3] \label{cdm}
     Fix $s>0$. There exists a constant $c=c_s>0$ such that the number of independent sets in every $n$-vertex 3-graph with average degree $d$ and maximum codegree $s$ is at least $2^{c\,n(\log d)^{3/2}/d^{1/2}}$.
 \end{theorem}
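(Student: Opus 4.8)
The plan is to prove the lower bound on the number of independent sets $i(H)$ via the hard-core model and an occupancy-fraction (fugacity-integral) argument; this is the natural route to the extra logarithmic factor that separates the count $(\log d)^{3/2}/d^{1/2}$ from the mere independence number $(\log d)^{1/2}/d^{1/2}$. First I would reduce to bounded maximum degree. Since every independent set of an induced subhypergraph $H[U]$ is an independent set of $H$, we have $i(H)\ge i(H[U])$, and because $\sum_v d(v)=3|E(H)|=nd$, at most $n/4$ vertices have degree exceeding $4d$; deleting them yields an induced subhypergraph $H'$ on $n'>3n/4$ vertices with maximum degree $\Delta\le 4d$ and maximum codegree still at most $s$. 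As $(\log(4d))^{3/2}/(4d)^{1/2}=\Omega((\log d)^{3/2}/d^{1/2})$, it suffices to prove $i(H')\ge 2^{c\,n'(\log\Delta)^{3/2}/\Delta^{1/2}}$ for a $3$-graph with maximum degree $\Delta\asymp d$ and codegree at most $s$.

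For $H'$, introduce the hard-core distribution on independent sets $\mathbf I$ with $\Pr(\mathbf I=J)\propto\lambda^{|J|}$, partition function $Z(\lambda)=\sum_{J\ \mathrm{ind}}\lambda^{|J|}$, and occupancy fraction $\alpha(\lambda)=\tfrac1{n'}\mathbb E_\lambda|\mathbf I|$. From $\tfrac{d}{d\lambda}\log Z(\lambda)=\mathbb E_\lambda|\mathbf I|/\lambda$ we get
\[
\log i(H')=\log Z(1)=\int_0^1\frac{n'\,\alpha(\lambda)}{\lambda}\,d\lambda ,
\]
so everything reduces to a lower bound on $\alpha(\lambda)$. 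Conditioning on the configuration off a fixed vertex $v$ gives the local identity $\Pr(v\in\mathbf I)=\tfrac{\lambda}{1+\lambda}\Pr(v\text{ addable})$, where $v$ is addable precisely when $\mathbf I$ restricted to the link $L_v$ (the graph with edges $\{a,b\}$ for which $\{v,a,b\}\in H'$) is independent in $L_v$. Here the codegree bound is exactly the statement that $L_v$ has maximum degree at most $s$, while $|E(L_v)|=d(v)\le\Delta$.

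The heart of the matter is to show $\Pr(v\text{ addable})\ge\exp(-C_s\,\Delta\,\alpha(\lambda)^2)$, i.e. that avoiding all $\le\Delta$ ``bad pair'' events $\{a,b\}\subseteq\mathbf I$ (each of probability $\asymp\alpha^2$) costs an exponential factor rather than forcing the probability to $0$. A plain union bound gives only $1-\Delta\alpha^2$, which is useless once $\Delta\alpha^2\gg 1$, and $\Delta\alpha^2\asymp\log\Delta$ is exactly the relevant regime; this is where bounded codegree is indispensable. Since $L_v$ has maximum degree at most $s$, Vizing's theorem splits its edges into $O(s)$ matchings, so the bad-pair events are spread over disjoint vertex pairs and are only weakly correlated. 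I would then obtain the occupancy lower bound either through the occupancy method of Davies, Jenssen, Perkins and Roberts (bounding $\alpha(\lambda)$ via the convexity of the local contribution of a random vertex against its sparse link) or through a cluster-expansion/inclusion–exclusion estimate for $\Pr(v\text{ addable})$ controlled by the matching decomposition. Either route yields the self-consistent mean-field bound $\underline\alpha(\lambda)$ solving $\underline\alpha=\tfrac{\lambda}{1+\lambda}\exp(-C_s\Delta\underline\alpha^2)$, and hence $\alpha(\lambda)\ge\underline\alpha(\lambda)$.

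Finally I would carry out the fugacity integral. Writing $x=\Delta\underline\alpha^2$, the equation at $\lambda=1$ forces $x\asymp\log\Delta$ and hence $\underline\alpha(1)\asymp(\log\Delta/\Delta)^{1/2}$; changing variables in $\int_0^1\underline\alpha(\lambda)\,d\lambda/\lambda$ shows the integral is dominated by a term of order $\Delta\,\underline\alpha(1)^3\asymp(\log\Delta)^{3/2}/\Delta^{1/2}$ (the Jacobian $\tfrac1\alpha+2C_s\Delta\alpha$ is what supplies the extra $\log$ factor). Multiplying by $n'$ and undoing the reduction gives $i(H)\ge i(H')\ge 2^{c\,n(\log d)^{3/2}/d^{1/2}}$. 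The main obstacle is precisely the occupancy estimate of the previous paragraph: making rigorous the quasi-independence of the bad-pair events uniformly in $\lambda$ — controlling the correlations of the hard-core measure well enough to replace the trivial addability bound by an exponential one — is the only genuinely hard step, and it is exactly what the bounded-codegree hypothesis exists to enable.
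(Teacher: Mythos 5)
You should know at the outset that the paper does not prove this statement at all: it is imported as Theorem 3 of \cite{CDM} and used as a black box. The proof in \cite{CDM} is a counting refinement of the semi-random (nibble) method for uncrowded hypergraphs in the tradition of Ajtai--Koml\'os--Pintz--Spencer--Szemer\'edi and Duke--Lefmann--R\"odl: one builds an independent set of size $t=\Theta(n(\log d)^{1/2}/d^{1/2})$ vertex by vertex, uses the bounded-codegree hypothesis to keep the residual hypergraph uncrowded, shows that at each step there are many admissible choices, and counts the resulting sequences (dividing by $t!$), which yields $2^{\Omega(t\log d)}=2^{\Omega(n(\log d)^{3/2}/d^{1/2})}$ distinct independent sets. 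Your route --- hard-core model, occupancy fraction, fugacity integral \`a la Davies--Jenssen--Perkins--Roberts --- is genuinely different from the cited proof, and your outer scaffolding is sound: the reduction to maximum degree is correct, the identity $\log i(H')=\int_0^1 n'\alpha(\lambda)\lambda^{-1}\,d\lambda$ is correct, and the closing calculus (the integral being dominated by a term of order $\Delta\,\alpha(1)^3\asymp(\log\Delta)^{3/2}/\Delta^{1/2}$) is right.

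However, the proposal has a genuine gap, and it sits exactly where you acknowledge it does: the inequality $\Pr(v\ \text{addable})\ge\exp(-C_s\Delta\alpha(\lambda)^2)$, equivalently the self-consistent bound $\alpha\ge\tfrac{\lambda}{1+\lambda}e^{-C_s\Delta\alpha^2}$, is asserted rather than proved, and this single inequality is the entire content of the theorem --- everything else in your writeup is routine. Neither of the two devices you invoke delivers it. A Vizing decomposition of $L_v$ into $O(s)$ matchings makes the bad pairs vertex-disjoint within each matching, but vertex-disjointness does not make the events $\{a,b\}\subseteq\mathbf I$ even approximately independent under the hard-core measure: correlations are global, and in the relevant regime $\Delta\alpha^2\asymp\log\Delta$ you need exponential accuracy, so union-bound or FKG-type corrections are useless. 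Cluster expansion fails as well, since it converges only when fugacity times a degree-type parameter is small, whereas you need the estimate at $\lambda=1$ with $\Delta$ large. Finally, the occupancy method of Davies--Jenssen--Perkins--Roberts does not transfer as a black box: in a triangle-free graph the blocking event for $v$ is a single occupied neighbor and the convexity/LP argument optimizes over local views of a neighborhood that induces no edges; here the blocking event involves \emph{pairs} in the link, the link vertices' occupancy probabilities are neither equal to $\alpha$ nor exchangeable, and formulating and solving the correct local optimization for bounded-codegree $3$-graphs is a substantial open-ended task, not a lemma you can cite. As written, then, this is a plan whose only hard step is deferred; to turn it into a proof you must actually establish the occupancy inequality, and that is where all the difficulty of the theorem lives.
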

 
{\bf Proof of Theorem~\ref{maincaps}.}
We start with the lower bound.
A cherry is the 5 vertex 3-graph comprising three edges, every two of which share the same two vertices. Let $H$ be a $(q,3)$-system with $n=q^2$ vertices, average degree at most $q^3$, and maximum codegree at most  $q$. Note that the collinear triples of any affine plane of order $q$ can be viewed as such a $(q,3)$-system $H$.

Our plan is to take a random induced subgraph of $H$ on $q^{3/5-\epsilon}$ vertices where there is a small set of vertices that touches every cherry. To this end we apply Lemma~\ref{hittinglemma} with $F$ being the 5-graph of copies of cherries in $H$ and $p=q^{-7/5-\epsilon}$. Letting $\alpha$ be sufficiently small in terms of $\epsilon$, we obtain
$$|F|p^{(1-\alpha)s}<n^2q^3p^{(1-\alpha)s}<q^7q^{-5(1-\alpha)(7/5+\epsilon)} = o(1).$$
Hence for $c$ a large constant, Lemma~\ref{hittinglemma} and standard Chernoff bounds yield that with probability greater than 0.9 say, a random induced subgraph $H'$ of $H$ with $p$ as above has $m=\Theta(q^{3/5-\epsilon})$ vertices, average degree  $d =O(p^2q^3)=O(q^{1/5-2\epsilon})$ and has a set $S$ of $O(1)$ vertices whose removal  makes it $F$-free. Since $H'-S$ has no cherry, it has maximum codegree at most two.  Theorem~\ref{cdm} now implies that the number of independent sets in $H'-S$, and hence also in $H$,  is at least $2^{c\, q^{1/2} (\log q)^{3/2}}$.

  We now prove the upper bound. Let $G=(P,L,E)$ be the point line incidence graph of the affine plane of order $q$.  It is well known that $G$ satisfies the bipartite expander mixing lemma:
 	$$\left|e(X,Y) - \frac{d}{n}|X||Y|\right|< \sqrt{q|X||Y|}$$
 	for all sets $X \subset P$ and $Y \subset L$. Let $I_{q,t}$ be the number of caps of size $t$ and let us enumerate such caps in the affine plane by choosing sequences of $t$ vertices in $P$. Note that $I_{q,t}=0$ for $t>q+2$. The first two vertices can be chosen arbitrarily. Given that $i \ge 2$ and vertices $v_1, \ldots, v_i$ have been chosen, let $Z_i=\{y \in L: \exists x, x' \in X, xy, x'y \in E\}$   be the set of common neighbors of pairs of vertices in $S_i=\{v_1, \ldots, v_i\}$.
 	Let $X_i \subset P$ be the set of nonneighbors of $Z_i$. The next vertex $v_{i+1}$ in the cap must come from $X_i$. The expander mixing lemma now yields
 	$$0 =e(X_i, Z_i)\ge \frac{|X_i||Z_i|}{q} - \sqrt{q|X_i||Z_i|}=\frac{|X_i|{i \choose 2}}{q} - \sqrt{q|X_i|{i \choose 2}}.$$
 	This yields $|X_i|\le q^3/{i \choose 2}.$ Consequently, For $2 < t \le q+2$, 

  $$I_{q,t} \le \frac{q^4\prod_{i=2}^{t-1} \min\{q^2, q^3/{i \choose 2}\}\}}{t!}.$$
If $t<2\sqrt q$ we just use the bound
$$I_{q,t} \le \frac{q^4\prod_{i=2}^{t-1} q^2}{t!} = 2^{O(\sqrt q \log q)}.$$
For $t>2\sqrt q$, using $n!>(n/e)^n$ we have
$$I_{q,t} \le \frac{
2^{O(\sqrt q \log q)}
\prod_{i=2\sqrt{q}}^{t-1} \frac{2q^3}{i(i-1)}
}
{t!} < 2^{O(\sqrt q \log q)} \frac{2^tq^{3t}}{(t!)^3}< 
2^{O(\sqrt q \log q)} \left(\frac{2^{1/3}e q}{t}\right)^{3t}
.$$
  An easy calculus exercise shows that the maximum of $(2^{1/3}e q/t)^{3t}$ for $2\sqrt q < t \le q+2$ and $q$ sufficiently large occurs either at the endpoint $t=q+2$ or at $t=2^{-2/3}e q$ (all logs are base 2). Checking both these points we see that the maximum is in the latter case and is less than $2^{5.77q}$. Consequently, $I_{q,t} < 2^{5.77q+ O(\sqrt q \log q)}$ and for $q$ sufficiently large,
$$I_q = q^2 + q^2(q^2-1) + q \cdot \max_{3\le t \le q+2}I_{q,t} <  2^{5.77q+ O(\sqrt q \log q)}< 2^{6q}$$
 as desired.
 	\qed

\end{document}